\numberwithin{equation}{section}
\numberwithin{figure}{section}
\theoremstyle{plain}
\newtheorem{thm}{\protect\theoremname}
  \theoremstyle{plain}
  \newtheorem{lem}[thm]{\protect\lemmaname}
  \providecommand{\lemmaname}{Lemma}
\providecommand{\theoremname}{Theorem}
\begin{document}

\title{Integers with a predetermined prime factorization}
\begin{abstract}
A classic question in analytic number theory is to find asymptotics
for $\sigma_{k}(x)$ and $\pi_{k}(x)$, the number of integers $n\leq x$
with exactly $k$ prime factors, where $\pi_{k}(x)$ has the added
constraint that all the factors are distinct. This problem was originally
resolved by Landau in 1900, and much work was subsequently done where
$k$ is allowed to vary. In this paper we look at a similar question
about integers with a specific prime factorization. Given $\boldsymbol{\alpha}\in\mathbb{N}^{k}$,
$\boldsymbol{\alpha}=\left(\alpha_{1},\alpha_{2},\dots,\alpha_{k}\right)$
let $\sigma_{\boldsymbol{\alpha}}(x)$ denote the number of integers
of the form $n=p_{1}^{\alpha_{1}}\cdots p_{k}^{\alpha_{k}}$ where
the $p_{i}$ are not necessarily distinct, and let $\pi_{\boldsymbol{\alpha}}(x)$ denote the same counting function with the added condition that the factors are distinct. Our main result
is asymptotics for both of these functions.
\end{abstract}

\author{Eric Naslund}

\maketitle

\section{Introduction}

One of the major problems in the 19th century was to find the growth
rate of the number of primes less then $x$, that is the function
\[
\pi(x):=\sum_{p\leq x}1.
\]
In 1797, Legendre conjectured that $\pi(x)$ is asymptotic to $\frac{x}{\log x}$,
written as $\pi(x)\sim\frac{x}{\log x}$, which means that we have
the limit 
\[
\lim_{x\rightarrow\infty}\frac{\pi(x)}{x/\log x}=1.
\]
Although a more precise conjecture was given by Gauss, little progress
was made over the next 50 years. In 1848 and 1850, Chebyshev made
several contributions, and managed to prove weaker upper and lower
bounds. A major breakthrough occurred in 1859, when Riemann published
his seminal paper, {}``On the Number of Primes Less Than a Given
Magnitude,'' in which he outlined a proof of Legendre's conjecture
using complex analysis and the zeta function. In 1896, 99 years after
Legendre made his conjecture, Hadamard and de la Vall\'{e}e Poussin rigorously
completed Riemann's outline, proving what is known today as the prime
number theorem \cite{BookMontVaughn}. In particular, we can write
down the explicit error term : 
\begin{equation}
\pi(x)=\frac{x}{\log x}+O\left(\frac{x}{\log^{2}x}\right),\label{eq:Prime number theorem}
\end{equation}
 but to be more precise than this we would need to introduce the function
from Gauss's conjecture.

A natural follow up question is whether or not we have similar asymptotics
for the number of integers with exactly $k$ prime factors. There
are two reasonable ways to define the counting function; let $\sigma_{k}(x)$
denote the number of integers less then $x$ with exactly $k$ prime
factors, and let $\pi_{k}(x)$ be the same but with the added constraint
that the $k$ prime factors must be distinct. For convenience, we
also define the sets $\mathcal{P}_{k}^{\sigma}=\left\{ n:\ n=p_{1}\cdots p_{k}\right\} $
and $\mathcal{P}_{k}^{\pi}=\left\{ n:\ n=p_{1}\cdots p_{k}\ \text{where}\ i\neq j\Rightarrow p_{i}\neq p_{j}\right\} $,
so that we may write 
\[
\sigma_{k}(x)=\sum_{\begin{array}{c}
n\leq x\\
n\in\mathcal{P}_{k}^{\sigma}
\end{array}}1\ \ \ \text{and}\ \ \ \pi_{k}(x)=\sum_{\begin{array}{c}
n\leq x\\
n\in\mathcal{P}_{k}^{\pi}
\end{array}}1.
\]
In $1900$ by Landau \cite{Landau1900} found the growth rate of these functions, and he proved that for fixed $k$ we have
\begin{equation}
\pi_{k}(x)\sim\sigma_{k}(x)\sim\frac{x\left(\log\log x\right)^{k-1}}{(k-1)!\log x}.\label{eq:pi/sigma Landau asymptotic}
\end{equation}
E. M. Wright then gave a short elementary proof of this in 1954 \cite{Wright1954}.
Heuristically we might expect this kind of asymptotic since $\sum_{k=1}^{\infty}\sigma_{k}(x)=\lfloor x\rfloor$,
and if we could ignore the error term and sum over all $k\leq\log x$,
we would arrive back at this equality again as 
\[
\sum_{k=1}^{\infty}\sigma_{k}(x)\approx\sum_{k=1}^{\infty}\frac{x\left(\log\log x\right)^{k-1}}{(k-1)!\log x}=\frac{x}{\log x}\sum_{k=0}^{\infty}\frac{\left(\log\log x\right)^{k}}{k!}=x.
\]
Note that even though this works out, the heuristic is not entirely
reliable. It seems to suggest that $\sigma_{k}(x)\sim\frac{x\left(\log\log x\right)^{k-1}}{(k-1)!\log x}$
even when $k$ varies with $x$, which is not true when $k\approx\log\log x$
\cite{HildTenenAldof}. In his paper, Landau also gave explicit
error terms, and showed that for $k\geq2$ 
\begin{equation}
\sigma_{k}(x)=\frac{x\left(\log\log x\right)^{k-1}}{(k-1)!\log x}+O\left(\frac{x\left(\log\log x\right)^{k-2}}{\log x}\right)\label{eq:Landau sigma}
\end{equation}
and 
\begin{equation}
\pi_{k}(x)=\frac{x\left(\log\log x\right)^{k-1}}{(k-1)!\log x}+O\left(\frac{x\left(\log\log x\right)^{k-2}}{\log x}\right)\label{eq:landau pi}
\end{equation}
where the notation $O(f(x))$ means that the error term is bounded
in absolute value by some constant multiple of $f(x)$.  (Although seperated on different lines, note that the above asymptotics are  indeed the same.)  In this paper we are
interested in something very similar, which is counting the number
of integers of a particular shape, integers of the form $p_{1}^{\alpha_{1}}\cdots p_{n}^{\alpha_{n}}$
where the $\alpha_{i}$ are fixed exponents. For example, we may ask
how many integers of the form $pq^{3}$ are there less than $x$.
To discuss this problem, we begin by introducing some notation. Given
a vector $\boldsymbol{\alpha}=\left(\alpha_{1},\cdots,\alpha_{k}\right)\in\mathbb{N}^{k}$,
define $\sigma_{\boldsymbol{\alpha}}(x)$ to be the number of integers
$n\leq x$ of the form $n=p_{1}^{\alpha_{1}}\cdots p_{k}^{\alpha_{k}}$,
allowing prime repetitions, and $\pi_{\boldsymbol{\alpha}}(x)$ to
be the number without prime repetitions. If we set $\mathcal{P}_{\boldsymbol{\alpha}}^{\sigma}=\left\{ n:\ n=p_{1}^{\alpha_{1}}\cdots p_{r}^{\alpha_{r}}\right\} $,
and $\mathcal{P}_{\boldsymbol{\alpha}}^{\pi}=\left\{ n:\ n=p_{1}^{\alpha_{1}}\cdots p_{r}^{\alpha_{r}}\ \text{where}\ i\neq j\Rightarrow p_{i}\neq p_{j}\right\} $,
then as was done for $\pi_{k}(x)$, and $\sigma_{k}(x)$, we can rewrite
these counting functions as 
\[
\sigma_{\boldsymbol{\alpha}}(x)=\sum_{\begin{array}{c}
n\leq x\\
n\in\mathcal{P}_{\boldsymbol{\alpha}}^{\sigma}
\end{array}}1\ \ \ \text{and}\ \ \ \pi_{\boldsymbol{\alpha}}(x)=\sum_{\begin{array}{c}
n\leq x\\
n\in\mathcal{P}_{\boldsymbol{\alpha}}^{\pi}
\end{array}}1.
\]
Our goal is to provide asymptotics for $\sigma_{\boldsymbol{\alpha}}(x)$ and $\pi_{\boldsymbol{\alpha}}(x)$,
and our main theorem is:
\begin{thm}
\label{thm: Main Theorem}Let $r,\alpha$ be positive integers. Suppose
we have a vector of the form $\boldsymbol{\alpha}=\left(\alpha,\cdots,\alpha,\alpha_{1},\cdots,\alpha_{r}\right)\in\mathbb{N}^{k+r}$,
where $k>0$ is the multiplicity of $\alpha$, and where $\alpha<\alpha_{i}$
for all $i$. Then if $\boldsymbol{\beta}=\left(\alpha_{1},\cdots,\alpha_{r}\right)\in\mathbb{N}^{r}$,
we have 
\[
\sigma_{\boldsymbol{\alpha}}\left(x\right)\sim\sigma_{k}\left(x^{\frac{1}{\alpha}}\right)\sum_{n\in\mathcal{P}_{\boldsymbol{\beta}}^{\sigma}}n^{-\frac{1}{\alpha}}
\]
and
\[
\pi_{\boldsymbol{\alpha}}\left(x\right)\sim\sigma_{k}\left(x^{\frac{1}{\alpha}}\right)\sum_{n\in\mathcal{P}_{\boldsymbol{\beta}}^{\pi}}n^{-\frac{1}{\alpha}}.
\]

\end{thm}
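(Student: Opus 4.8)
The key structural observation is that the hypothesis $\alpha<\alpha_{i}$ lets us peel off the repeated exponent $\alpha$ as the ``bulk'' of $n$. Writing a generic element of $\mathcal{P}_{\boldsymbol{\alpha}}^{\sigma}$ as $n=m^{\alpha}s$ with $m=p_{1}\cdots p_{k}\in\mathcal{P}_{k}^{\sigma}$ and $s=q_{1}^{\alpha_{1}}\cdots q_{r}^{\alpha_{r}}\in\mathcal{P}_{\boldsymbol{\beta}}^{\sigma}$, every representation of $n$ determines such a pair and conversely, so that $\mathcal{P}_{\boldsymbol{\alpha}}^{\sigma}=\{m^{\alpha}s:m\in\mathcal{P}_{k}^{\sigma},\,s\in\mathcal{P}_{\boldsymbol{\beta}}^{\sigma}\}$; in the distinct case one has moreover $\mathcal{P}_{\boldsymbol{\alpha}}^{\pi}=\{m^{\alpha}s:m\in\mathcal{P}_{k}^{\pi},\,s\in\mathcal{P}_{\boldsymbol{\beta}}^{\pi},\,\gcd(m,s)=1\}$. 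Counting pairs first, the plan is to study
\[
N(x)=\sum_{s\in\mathcal{P}_{\boldsymbol{\beta}}^{\sigma}}\#\{m\in\mathcal{P}_{k}^{\sigma}:m^{\alpha}s\leq x\}=\sum_{s\in\mathcal{P}_{\boldsymbol{\beta}}^{\sigma}}\sigma_{k}\!\left((x/s)^{1/\alpha}\right),
\]
together with the analogous sum built from $\pi_{k}$, $\mathcal{P}_{\boldsymbol{\beta}}^{\pi}$, and the coprimality restriction for $\pi_{\boldsymbol{\alpha}}$.

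First I would establish the pointwise asymptotic for each fixed $s$. Putting $y=(x/s)^{1/\alpha}=x^{1/\alpha}s^{-1/\alpha}$, we have $\log y\sim\tfrac{1}{\alpha}\log x$ and $\log\log y\sim\log\log x$ as $x\to\infty$, so Landau's asymptotic \eqref{eq:pi/sigma Landau asymptotic} gives
\[
\sigma_{k}\!\left((x/s)^{1/\alpha}\right)\sim s^{-1/\alpha}\,\sigma_{k}\!\left(x^{1/\alpha}\right),
\]
and likewise $\pi_{k}((x/s)^{1/\alpha})\sim s^{-1/\alpha}\sigma_{k}(x^{1/\alpha})$ since $\pi_{k}\sim\sigma_{k}$.

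Summing over $s$ requires interchanging a limit with an infinite sum, and here the hypothesis $\alpha<\alpha_{i}$ is essential: because each $\alpha_{i}/\alpha>1$, the series $\sum_{s\in\mathcal{P}_{\boldsymbol{\beta}}^{\sigma}}s^{-1/\alpha}$ converges, being dominated by $\prod_{i=1}^{r}\sum_{p}p^{-\alpha_{i}/\alpha}<\infty$. To justify the interchange I would truncate at a slowly growing $\Lambda=\Lambda(x)\to\infty$: the head $s\leq\Lambda$ contributes $(1+o(1))\,\sigma_{k}(x^{1/\alpha})\sum_{s\leq\Lambda}s^{-1/\alpha}$, whose coefficient tends to the full series, while the tail is handled by a uniform bound of the shape $\sigma_{k}((x/s)^{1/\alpha})\ll s^{-1/\alpha}\sigma_{k}(x^{1/\alpha})$ in the relevant range $s\leq x$ (the summand vanishing once $s\gg x$) together with $\sum_{s>\Lambda}s^{-1/\alpha}\to0$. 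This yields $N(x)\sim\sigma_{k}(x^{1/\alpha})\sum_{s\in\mathcal{P}_{\boldsymbol{\beta}}^{\sigma}}s^{-1/\alpha}$, and the analogous identity for the $\pi_{k}$-sum.

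For $\pi_{\boldsymbol{\alpha}}$ the map $n\mapsto(m,s)$, with $m$ the product of the primes dividing $n$ to exponent exactly $\alpha$, is a bijection onto the admissible pairs, so the counting sum equals $\pi_{\boldsymbol{\alpha}}(x)$ once the coprimality condition $\gcd(m,s)=1$ is imposed, and the effect of that condition is $o(\sigma_{k}(x^{1/\alpha}))$; this settles that case. The main obstacle is the $\sigma$ case, where a single integer can arise from several pairs $(m,s)$ --- for instance when $m$ and $s$ share a prime, or when $m$ is not squarefree --- so that only $\sigma_{\boldsymbol{\alpha}}(x)\leq N(x)$ is immediate. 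I would bound the overcount $N(x)-\sigma_{\boldsymbol{\alpha}}(x)$ by the number of such collision pairs: those with $m\notin\mathcal{P}_{k}^{\pi}$ are negligible because $\sigma_{k}(y)-\pi_{k}(y)=o(\sigma_{k}(y))$ by \eqref{eq:pi/sigma Landau asymptotic}, and those with $\gcd(m,s)>1$ force $m$ to be divisible by a fixed prime of $s$, which replaces $\sigma_{k}$ by $\sigma_{k-1}$ and hence costs a factor $1/\log\log x$, again lower order. A matching lower bound comes from restricting to squarefree $m$ whose prime factors all exceed the largest prime of $s$, for which the representation is unique and distinct pairs give distinct integers. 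Carrying out this collision estimate uniformly in $s$ while simultaneously controlling the tail of the $s$-sum is the delicate technical point of the argument.
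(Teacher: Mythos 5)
Your overall architecture is the same as the paper's: decompose $n=m^{\alpha}s$, reduce to the hyperbola-type sum $N(x)=\sum_{s}\sigma_{k}\left(\left(x/s\right)^{1/\alpha}\right)$, extract the main term from small $s$, and treat repeated representations and the coprimality condition as lower-order corrections. Your collision analysis is the right idea, and your insistence on $\gcd(m,s)=1$ in the lower bound is genuinely necessary: $m\in\mathcal{P}_{k}^{\pi}$ alone does not force uniqueness (for $\boldsymbol{\alpha}=(1,1,2,3)$ one has $pq\cdot q^{2}r^{3}=pr\cdot q^{3}r^{2}$), a point the paper's own sandwich passes over. The genuine gap is in the step carrying all the analytic weight: the claimed uniform bound $\sigma_{k}\left(\left(x/s\right)^{1/\alpha}\right)\ll s^{-1/\alpha}\sigma_{k}\left(x^{1/\alpha}\right)$ on the range $s\leq x$ is false. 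Since $s^{-1/\alpha}\sigma_{k}\left(x^{1/\alpha}\right)\asymp\left(x/s\right)^{1/\alpha}\left(\log\log x\right)^{k-1}/\log x$, the right-hand side is $o(1)$ whenever $\left(x/s\right)^{1/\alpha}$ grows more slowly than $\log x/\left(\log\log x\right)^{k-1}$, while the left-hand side is at least $1$ as soon as $\left(x/s\right)^{1/\alpha}\geq2^{k}$; taking $s_{0}\in\mathcal{P}_{\boldsymbol{\beta}}^{\sigma}$ large and $x=2^{k\alpha}s_{0}$ defeats any implied constant. Near $s=x$ the domination fails by a factor of order $\log x$ (up to $\log\log$ powers), so your tail estimate ``tail $\ll\sigma_{k}\left(x^{1/\alpha}\right)\sum_{s>\Lambda}s^{-1/\alpha}$'' does not follow, and the interchange of limit and sum for a merely slowly growing $\Lambda(x)$ is not justified by this argument.

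The repair requires recovering that lost factor of $\log x$ from the convergence of the series, which forces the cutoff to be a sufficiently large power of $\log x$ rather than arbitrary. Concretely, take $\Lambda=\left(\log x\right)^{C}$ with $C\left(\frac{1}{\alpha}-\frac{1}{\alpha_{1}}\right)>1$. Then the trivial bound $\sigma_{k}(y)\leq y$ gives tail $\leq x^{1/\alpha}\sum_{s>\Lambda}s^{-1/\alpha}$, and partial summation against the counting bound \ref{eq:sigma beta upper bound} shows $\sum_{s>\Lambda}s^{-1/\alpha}=O\left(\left(\log x\right)^{-C\left(\frac{1}{\alpha}-\frac{1}{\alpha_{1}}\right)}\left(\log\log\log x\right)^{r-1}\right)$, so the tail is $o\left(x^{1/\alpha}\left(\log\log x\right)^{k-1}/\log x\right)$ as needed; this is exactly the role played in the paper by Lemma \ref{lem: truncating sigma alpha series}, which achieves the same end by swapping the order of summation. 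With a polylogarithmic cutoff your head estimate does survive: for $s\leq\left(\log x\right)^{C}$ one has $\log\left(x/s\right)=\left(1+O\left(\log\log x/\log x\right)\right)\log x$, so $\sigma_{k}\left(\left(x/s\right)^{1/\alpha}\right)=\left(1+o(1)\right)s^{-1/\alpha}\sigma_{k}\left(x^{1/\alpha}\right)$ holds uniformly there, and $\sum_{s\leq\Lambda}s^{-1/\alpha}$ tends to the full series. So the plan is sound and parallel to the paper's, but as written the dominated-convergence step fails, and fixing it is precisely the quantitative truncation lemma you tried to bypass.
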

\noindent The above theorem tells us that the higher powers introduce a constant factor into the asymptotic since both of the series $\sum_{n\in\mathcal{P}_{\boldsymbol{\beta}}^{\sigma}}n^{-\frac{1}{\alpha}}$
and $\sum_{n\in\mathcal{P}_{\boldsymbol{\beta}}^{\pi}}n^{-\frac{1}{\alpha}}$
converge absolutely.   The convergence of these series follows from the fact that $\frac{\alpha_{i}}{\alpha}>1$
along with equation \ref{eq:sigma beta upper bound} in the next section.
In particular, returning to our previous example of counting the number
of integers of the form $pq^{3}$ less than $x$, we have that $\mathcal{P}_{\boldsymbol{\beta}}^{\pi}=\mathcal{P}_{\boldsymbol{\beta}}^{\sigma}=\left\{ p^{3}:\ p \text{ is prime}\right\} $,
and hence 
\[
\pi_{\left(1,3\right)}(x)\sim\sigma_{\left(1,3\right)}(x)\sim\frac{x}{\log x}\sum_{p}\frac{1}{p^{3}}=\frac{x}{\log x}P(3)
\]
where $P(s)=\sum_{p}p^{-s}$ is the prime zeta function. We can ask
whether the constant can always be rewritten as a product of prime
zeta functions, and this is answered by the following theorem:
\begin{thm}
\label{thm:Theorem 2}Suppose we are given $\alpha<\alpha_{1}\leq\cdots\leq\alpha_{r}$,
and that for any choice of $\epsilon_{i}\in\left\{ -1,0,1\right\} $,
we have $\sum_{i}\epsilon_{i}\alpha_{i}=0$ implies $\epsilon_{i}=0$
for every $i$. Then 
\[
\sum_{n\in\mathcal{P}_{\boldsymbol{\beta}}^{\sigma}}n^{-\frac{1}{\alpha}}=\prod_{i=1}^{r}P\left(\frac{\alpha_{i}}{\alpha}\right)
\]
where $P(s)=\sum_{p}p^{-s}$ is the prime zeta function. This is equivalent
to the condition that every $n\in\mathcal{P}_{\boldsymbol{\beta}}^{\sigma}$,
where $\boldsymbol{\beta}=\left(\alpha_{1},\dots,\alpha_{r}\right)$,
has a unique representation as $n=p_{1}^{\alpha_{1}}\cdots p_{r}^{\alpha_{r}}$.
\end{thm}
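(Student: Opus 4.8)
The plan is to expand the product of prime zeta functions into a sum over ordered tuples of primes and then compare it term by term with the series on the left. This reduces the entire theorem to a purely combinatorial statement: how many ways can a given integer $n$ be written in the form $p_1^{\alpha_1}\cdots p_r^{\alpha_r}$?

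First I would record that each factor $P(\alpha_i/\alpha)=\sum_p p^{-\alpha_i/\alpha}$ is an absolutely convergent series, since $\alpha_i/\alpha>1$ for every $i$. As the product runs over only finitely many such series, I may multiply them out and rearrange freely, obtaining
\[
\prod_{i=1}^{r}P\!\left(\tfrac{\alpha_i}{\alpha}\right)=\sum_{p_1,\dots,p_r}\prod_{i=1}^{r}p_i^{-\alpha_i/\alpha}=\sum_{p_1,\dots,p_r}\Bigl(p_1^{\alpha_1}\cdots p_r^{\alpha_r}\Bigr)^{-1/\alpha},
\]
a sum over all ordered $r$-tuples of primes. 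Collecting terms according to the value $n=p_1^{\alpha_1}\cdots p_r^{\alpha_r}$, and letting $R(n)$ be the number of tuples producing a given $n$, this becomes $\sum_{n\in\mathcal P_{\boldsymbol{\beta}}^\sigma}R(n)\,n^{-1/\alpha}$. Since all terms are positive and $R(n)\ge 1$ for each $n\in\mathcal P_{\boldsymbol{\beta}}^\sigma$, comparison with $\sum_{n\in\mathcal P_{\boldsymbol{\beta}}^\sigma}n^{-1/\alpha}$ shows the two agree if and only if $R(n)=1$ for every $n$. This is exactly the asserted equivalence with unique representability, so it remains only to show that the hypothesis forces $R(n)=1$.

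The combinatorial core is to recognize the hypothesis as a distinct-subset-sums condition. A relation $\sum_i\epsilon_i\alpha_i=0$ with $\epsilon_i\in\{-1,0,1\}$ and not all zero is the same datum as a pair of distinct subsets $T_1=\{i:\epsilon_i=1\}$ and $T_2=\{i:\epsilon_i=-1\}$ with $\sum_{i\in T_1}\alpha_i=\sum_{i\in T_2}\alpha_i$; conversely any two distinct subsets of equal sum yield such an $\epsilon$. Hence the hypothesis is equivalent to all $2^r$ subset sums of $\alpha_1,\dots,\alpha_r$ being distinct (note this already forces the $\alpha_i$ to be pairwise distinct). I would then group a representation $n=\prod_i p_i^{\alpha_i}$ by prime: for each prime $p$ the set $A_p=\{i:p_i=p\}$ satisfies $v_p(n)=\sum_{i\in A_p}\alpha_i$, where $v_p$ denotes the $p$-adic valuation. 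Two representations of the same $n$ must have $\sum_{i\in A_p}\alpha_i=\sum_{i\in A_p'}\alpha_i$ for every $p$, and distinct subset sums then force $A_p=A_p'$ for all $p$, so the representations coincide and $R(n)=1$. For the converse (needed only for the stated equivalence, not the main implication), a subset-sum collision with $T_1,T_2$ disjoint and nonempty produces an explicit $n$ with $R(n)\ge 2$, by swapping two primes between the slots indexed by $T_1$ and by $T_2$.

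I expect the main obstacle to lie in the bookkeeping of this last step rather than in any analytic difficulty: one must set up the correspondence between representations of $n$ and labeled partitions of the index set $\{1,\dots,r\}$ with care, correctly accounting for primes that do not divide $n$ (the empty subset, of sum $0$, which is unambiguous because every $\alpha_i>0$), and verifying that ``equal subset sums imply equal subsets'' is precisely the distinct-subset-sums translation of the hypothesis. By contrast, the absolute convergence and term rearrangement in the second step are routine once $\alpha_i/\alpha>1$ is in hand.
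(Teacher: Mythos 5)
Your proposal is correct and follows essentially the same route as the paper: expand the product of prime zeta functions into a sum over prime tuples (justified by absolute convergence since $\alpha_{i}/\alpha>1$), reduce the identity to unique representability of each $n\in\mathcal{P}_{\boldsymbol{\beta}}^{\sigma}$, and identify failure of uniqueness with a subset-sum collision among the $\alpha_{i}$, exactly the paper's correspondence. If anything, your $p$-adic valuation argument for the direction ``distinct subset sums imply $R(n)=1$'' is more careful than the paper's terse assertion that two representations ``must be because of a factor of the form $q^{M}p^{M}$,'' and your positivity argument ($R(n)\geq1$ with equality needed termwise) cleanly yields the stated equivalence in both directions.
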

\noindent For example, the above two theorems imply that the number of integers
of the form $n=p_{1}p_{2}p_{3}^{3}p_{4}^{5}p_{5}^{19}$, with $n\leq x$,
will be asymptotic to 
\[
\sigma_{2}\left(x\right)P(3)P(5)P(19)\sim\frac{x\log\log x}{\log x}P(3)P(5)P(19).
\]

\section{The Main Result}

It is very important to split up the smallest power, as this is contributes
the most to the sum. Throughout this section, we write our vector
of exponents as $\boldsymbol{\alpha}=\left(\alpha,\cdots,\alpha,\alpha_{1},\cdots,\alpha_{r}\right)\in\mathbb{N}^{k+r}$,
with $1\leq\alpha<\alpha_{1}\leq\cdots\leq\alpha_{r}$, where $k>0$
is the multiplicity of $\alpha$, and let $\boldsymbol{\beta}=\left(\alpha_{1},\cdots,\alpha_{r}\right)\in\mathbb{N}^{r}$.
To start, we provide a simple upper bound for $\sigma_{\boldsymbol{\beta}}(x)$.
Notice that 
\[
\pi_{\boldsymbol{\beta}}(x)\leq\sigma_{\boldsymbol{\beta}}(x)=\sum_{\begin{array}{c}
m\leq x\\
m\in\mathcal{P}_{\boldsymbol{\beta}}^{\sigma}
\end{array}}1\leq\sum_{p_{1}^{\alpha_{1}}\cdots p_{r}^{\alpha_{r}}\leq x}1,
\]
where the right hand sum ranges over all vectors of primes of length
$r$ satisfying $p_{1}^{\alpha_{1}}\cdots p_{r}^{\alpha_{r}}\leq x$.
Since $\alpha_{1}\leq\alpha_{i}$ for all $i$, and $p_{1}^{\alpha_{1}}\cdots p_{r}^{\alpha_{r}}\leq x$ implies that $p_{1}^{\alpha_{1}}p_{2}^{\alpha_{1}}\cdots p_{r}^{\alpha_{1}}\leq x$, we see that replacing every exponent by $\alpha_{1}$ only increases
the sum. Then using \ref{eq:pi/sigma Landau asymptotic} we have 
\begin{equation}
\pi_{\boldsymbol{\beta}}(x)\leq\sigma_{\boldsymbol{\beta}}(x)\leq\sum_{p_{1}\cdots p_{r}\leq x^{\frac{1}{\alpha_{1}}}}1=O\left(x^{\frac{1}{\alpha_{1}}}\frac{\left(\log\log x\right)^{r-1}}{\log x}\right).\label{eq:sigma beta upper bound}
\end{equation}
The following subsection is devoted to examining $\sigma_{\boldsymbol{\alpha}}(x)$.
The key will be using the hyperbola method, and most of the lemmas
will apply identically to the proof for $\pi_{\boldsymbol{\alpha}}(x)$.

\subsection{$\sigma_{\boldsymbol{\alpha}}(x)$}

Each integer $n\in\mathcal{P}_{\boldsymbol{\alpha}}^{\sigma}$ has
one part in $\mathcal{P}_{k}^{\sigma}$, and one part in $\mathcal{P}_{\boldsymbol{\beta}}^{\sigma}$,
and our goal will be to split it up between these two to better understand
$\sigma_{\boldsymbol{\alpha}}(x)$. With this in mind, we might expect
\[
\sigma_{\boldsymbol{\alpha}}(x)\approx\sum_{\begin{array}{c}
mn^{\alpha}\leq x\\
n\in\mathcal{P}_{k}^{\sigma},m\in\mathcal{P}_{\boldsymbol{\beta}}^{\sigma}
\end{array}}1.
\]
However, this will not be an exact equality as an integer $k\leq x$
with $k\in\mathcal{P}_{\boldsymbol{\alpha}}^{\sigma}$ may have more
than one representation of the form $k=mn^{\alpha}$ with $n\in\mathcal{P}_{k}^{\sigma},\ m\in\mathcal{P}_{\boldsymbol{\beta}}^{\sigma}$.
Since $k\in\mathcal{P}_{\boldsymbol{\alpha}}^{\sigma}$ can have at
most one representation of the form $k=mn^{\alpha}$ with $n\in\mathcal{P}_{k}^{\pi},\ m\in\mathcal{P}_{\boldsymbol{\beta}}^{\sigma}$,
we have the inequalities
\[
\sum_{\begin{array}{c}
mn^{\alpha}\leq x\\
n\in\mathcal{P}_{k}^{\pi},m\in\mathcal{P}_{\boldsymbol{\beta}}^{\sigma}
\end{array}}1\leq\sigma_{\boldsymbol{\alpha}}(x)\leq\sum_{\begin{array}{c}
mn^{\alpha}\leq x\\
n\in\mathcal{P}_{k}^{\sigma},m\in\mathcal{P}_{\boldsymbol{\beta}}^{\sigma}
\end{array}}1.
\]
Rewriting so that we first sum over $m$, this is 
\[
\sum_{\begin{array}{c}
mn^{\alpha}\leq x\\
n\in\mathcal{P}_{k}^{\sigma},m\in\mathcal{P}_{\boldsymbol{\beta}}^{\sigma}
\end{array}}1=\sum_{\begin{array}{c}
m\leq x\\
m\in\mathcal{P}_{\boldsymbol{\beta}}^{\sigma}
\end{array}}\sum_{\begin{array}{c}
n^{\alpha}\leq\frac{x}{m}\\
n\in\mathcal{P}_{k}^{\sigma}
\end{array}}1=\sum_{\begin{array}{c}
m\leq x\\
m\in\mathcal{P}_{\boldsymbol{\beta}}^{\sigma}
\end{array}}\sigma_{k}\left(\left(\frac{x}{m}\right)^{\frac{1}{\alpha}}\right)
\]
and we have that 
\begin{equation}
\sum_{\begin{array}{c}
m\leq x\\
m\in\mathcal{P}_{\boldsymbol{\beta}}^{\sigma}
\end{array}}\pi_{k}\left(\left(\frac{x}{m}\right)^{\frac{1}{\alpha}}\right)\leq\sigma_{\boldsymbol{\alpha}}(x)\leq\sum_{\begin{array}{c}
m\leq x\\
m\in\mathcal{P}_{\boldsymbol{\beta}}^{\sigma}
\end{array}}\sigma_{k}\left(\left(\frac{x}{m}\right)^{\frac{1}{\alpha}}\right).\label{eq: upper+low sigma bounds}
\end{equation}
Our first goal will be to remove all of the terms from the sum with $m\geq\frac{x}{\left(\log x\right)^{C}}$
for some constant $C>2$, without introducing large error. For example, we could take $C=3$ to prove the asymptotic.  However to achieve the optimal error term we need something of the form
$C=2\alpha\alpha_{1}+1$, a choice which will become clear later on.
Note that we need only bound this sum for $\sigma_{k}(x)$, since
$\pi_{k}(x)\leq\sigma_{k}(x)$, and this is covered by the following
lemma.
\begin{lem}
\label{lem: truncating sigma alpha series}For $C>1$ we have that
\[
\sum_{\begin{array}{c}
\left(\log x\right)^{C}<m\leq x\\
m\in\mathcal{P}_{\boldsymbol{\beta}}^{\sigma}
\end{array}}\sigma_{k}\left(\left(\frac{x}{m}\right)^{\frac{1}{\alpha}}\right)=O\left(\frac{x^{\frac{1}{\alpha}}}{\left(\log x\right)^{\left(C-1\right)\left(1-\frac{\alpha}{\alpha_{1}}\right)}}\right).
\]
\end{lem}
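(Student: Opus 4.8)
The plan is to discard all the arithmetic content of $\sigma_k$ and reduce the sum to the tail of the convergent series $\sum_{m\in\mathcal{P}_{\boldsymbol{\beta}}^{\sigma}}m^{-1/\alpha}$, which can then be controlled by partial summation against the counting function $\sigma_{\boldsymbol{\beta}}$. Since $\mathcal{P}_{k}^{\sigma}$ consists of integers, the trivial bound $\sigma_k(y)\le\lfloor y\rfloor\le y$ already gives $\sigma_k\!\left((x/m)^{1/\alpha}\right)\le x^{1/\alpha}m^{-1/\alpha}$. Writing $T=(\log x)^{C}$, this lets me pull out the factor $x^{1/\alpha}$ and even enlarge the range to all $m>T$ (the extra terms are nonnegative):
\[
\sum_{\substack{(\log x)^{C}<m\le x\\ m\in\mathcal{P}_{\boldsymbol{\beta}}^{\sigma}}}\sigma_k\!\left(\left(\frac{x}{m}\right)^{1/\alpha}\right)\le x^{1/\alpha}\sum_{\substack{m>T\\ m\in\mathcal{P}_{\boldsymbol{\beta}}^{\sigma}}}m^{-1/\alpha}.
\]
Everything now hinges on estimating the tail of this convergent series.

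For that tail I would set $A(t)=\sigma_{\boldsymbol{\beta}}(t)$ and apply Abel summation, so that $\sum_{m>T}m^{-1/\alpha}$ becomes a boundary term at $T$ together with $\tfrac1\alpha\int_{T}^{\infty}A(t)\,t^{-1/\alpha-1}\,dt$, the contribution at infinity vanishing. The estimate \eqref{eq:sigma beta upper bound} gives $A(t)=O\!\left(t^{1/\alpha_1}(\log\log t)^{r-1}/\log t\right)$, and since $\alpha<\alpha_1$ the exponent $\delta:=\tfrac1\alpha-\tfrac1{\alpha_1}=\tfrac1\alpha\!\left(1-\tfrac{\alpha}{\alpha_1}\right)$ is strictly positive. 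Because $(\log\log t)^{r-1}/\log t=O(1)$, both the boundary term and the integral are $O(t^{-\delta})$, the integral converging precisely because $\delta>0$; this is the one spot where the hypothesis $\alpha<\alpha_1$, equivalently the absolute convergence of the series, is used. Hence
\[
\sum_{\substack{m>T\\ m\in\mathcal{P}_{\boldsymbol{\beta}}^{\sigma}}}m^{-1/\alpha}=O\!\left(T^{-\delta}\right)=O\!\left((\log x)^{-C\delta}\right).
\]

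Substituting back produces $O\!\left(x^{1/\alpha}(\log x)^{-C\delta}\right)$ with $\delta=\tfrac1\alpha-\tfrac1{\alpha_1}$, which is the asserted power saving; indeed the trivial bound yields the slightly stronger exponent $C\delta\ge(C-1)\delta$, so the $C-1$ in the statement keeps one unit of $C$ in reserve. That reserve is exactly the slack one needs in order to replace the trivial bound by Landau's sharper $\sigma_k(y)\ll y(\log\log y)^{k-1}/\log y$, which contributes a factor $(\log\log x)^{k-1}$ and gains an extra $1/\log x$, at the price of having to track the size of $\log(x/m)$ as $m\to x$ — a complication the trivial bound neatly sidesteps. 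The only step demanding genuine care is this bookkeeping of slowly-varying factors: one must check that at the scale $T=(\log x)^{C}$ the quantity $(\log\log T)^{r-1}/\log T$ is $O(1)$, so that no stray power of $\log\log x$ survives to degrade the clean power of $\log x$ in the final bound.
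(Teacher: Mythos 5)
Your decomposition is genuinely different from the paper's: the paper swaps the order of summation, applies the bound \ref{eq:sigma beta upper bound} to the inner sum over $m\in\mathcal{P}_{\boldsymbol{\beta}}^{\sigma}$, and then estimates the remaining sum over $n\in\mathcal{P}_{k}^{\sigma}$, whereas you discard the arithmetic of $\sigma_{k}$ entirely via the trivial bound $\sigma_{k}(y)\leq y$ and control the tail $\sum_{m>T}m^{-1/\alpha}$ by partial summation against $\sigma_{\boldsymbol{\beta}}$. Up to its last step your argument is correct and cleanly yields
\[
\sum_{\left(\log x\right)^{C}<m\leq x,\ m\in\mathcal{P}_{\boldsymbol{\beta}}^{\sigma}}\sigma_{k}\left(\left(\frac{x}{m}\right)^{\frac{1}{\alpha}}\right)=O\left(x^{\frac{1}{\alpha}}\left(\log x\right)^{-C\delta}\right),\qquad\delta=\frac{1}{\alpha}-\frac{1}{\alpha_{1}}.
\]

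The gap is the final comparison. The exponent in the lemma is $\left(C-1\right)\left(1-\frac{\alpha}{\alpha_{1}}\right)=\left(C-1\right)\alpha\delta$, not $\left(C-1\right)\delta$: you have silently identified $1-\frac{\alpha}{\alpha_{1}}$ with $\frac{1}{\alpha}-\frac{1}{\alpha_{1}}$, which is legitimate only when $\alpha=1$. Your bound implies the stated one precisely when $C\delta\geq\left(C-1\right)\alpha\delta$, i.e.\ when $\alpha=1$ or $C\leq\frac{\alpha}{\alpha-1}$; for every $\alpha\geq2$ this fails as soon as $C>2$, and in particular for the choice $C=2\alpha\alpha_{1}+1$ that the paper makes when it invokes the lemma. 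So, as written, you have proved the lemma only in the case $\alpha=1$.

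The missing factor of $\alpha$ cannot be recovered by any argument, however, because the lemma as printed is false when $\alpha\geq2$ and $C$ is large. Take $\alpha=2$, $\alpha_{1}=3$, $k=r=1$, so the sum is $\sum\pi\left(\left(x/p^{3}\right)^{1/2}\right)$ over primes with $\left(\log x\right)^{C}<p^{3}\leq x$: the primes $p\asymp\left(\log x\right)^{C/3}$ alone contribute $\gg x^{1/2}\left(\log x\right)^{-1-C/6}\left(\log\log x\right)^{-1}$, which dominates the claimed $x^{1/2}\left(\log x\right)^{-\left(C-1\right)/3}$ once $C>8$. (The paper's own proof commits the matching slip: when it estimates $\sum_{n\leq x^{1/\alpha}\left(\log x\right)^{-C/\alpha}}n^{-\alpha/\alpha_{1}}$, the power $\left(\log x\right)^{C/\alpha}$ in the summation range is treated as $\left(\log x\right)^{C}$.) The correct statement carries the exponent $\left(C-1\right)\left(\frac{1}{\alpha}-\frac{1}{\alpha_{1}}\right)$, which is, up to the harmless difference between $C$ and $C-1$, exactly what you established; and it still suffices for the rest of the paper, since with $C=2\alpha\alpha_{1}+1$ it equals $2\left(\alpha_{1}-\alpha\right)\geq2$, which is all that the error terms in \ref{eq:sigma alpha asymptotic ALMOST} and beyond require. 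So the honest summary is: your bound is the right lemma, but your deduction of the stated lemma from it is invalid, and no valid one exists.
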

\begin{proof}
We may change the order of summation and write 
\begin{eqnarray*}
\sum_{\begin{array}{c}
\left(\log x\right)^{C}\leq m\leq x\\
m\in\mathcal{P}_{\boldsymbol{\beta}}^{\sigma}
\end{array}}\sigma_{k}\left(\left(\frac{x}{m}\right)^{\frac{1}{\alpha}}\right) & = & \sum_{\begin{array}{c}
\left(\log x\right)^{C}\leq m\leq x\\
m\in\mathcal{P}_{\boldsymbol{\beta}}^{\sigma}
\end{array}}\sum_{\begin{array}{c}
n^{\alpha}\leq\frac{x}{m}\\
n\in\mathcal{P}_{k}^{\sigma}
\end{array}}1\\
 & = & \sum_{\begin{array}{c}
n^{\alpha}\leq\frac{x}{\left(\log x\right)^{C}}\\
n\in\mathcal{P}_{k}^{\sigma}
\end{array}}\sum_{\begin{array}{c}
\left(\log x\right)^{C}\leq m\leq\frac{x}{n^{\alpha}}\\
m\in\mathcal{P}_{\boldsymbol{\beta}}^{\sigma}
\end{array}}1.
\end{eqnarray*}
Using \ref{eq:sigma beta upper bound} this is bounded above by 
\begin{eqnarray*}
\sum_{\begin{array}{c}
n^{\alpha}\leq\frac{x}{\left(\log x\right)^{C}}\\
n\in\mathcal{P}_{k}^{\sigma}
\end{array}}\sum_{\begin{array}{c}
m\leq\frac{x}{n^{\alpha}}\\
m\in\mathcal{P}_{\boldsymbol{\beta}}^{\sigma}
\end{array}}1 & = & \sum_{\begin{array}{c}
n^{\alpha}\leq\frac{x}{\left(\log x\right)^{C}}\\
n\in\mathcal{P}_{k}^{\sigma}
\end{array}}O\left(\frac{x^{\frac{1}{\alpha_{1}}}}{n^{\frac{\alpha}{\alpha_{1}}}}\frac{\left(\log\log\left(x/n^{\alpha}\right)\right)^{r-1}}{\log\left(x/n^{\alpha}\right)}\right)\\
 & = & O\left(x^{\frac{1}{\alpha_{1}}}\left(\log\log x\right)^{r-1}\sum_{\begin{array}{c}
n^{\alpha}\leq\frac{x}{\left(\log x\right)^{C}}\\
n\in\mathcal{P}_{k}^{\sigma}
\end{array}}\frac{1}{n^{\frac{\alpha}{\alpha_{1}}}}\right).
\end{eqnarray*}
Taking the trivial bound, the inner sum becomes 
\begin{eqnarray*}
\sum_{\begin{array}{c}
n^{\alpha}\leq\frac{x}{\left(\log x\right)^{C}}\\
n\in\mathcal{P}_{k}^{\sigma}
\end{array}}\frac{1}{n^{\frac{\alpha}{\alpha_{1}}}} & \leq & \sum_{n\leq\frac{x^{\frac{1}{\alpha}}}{\log^{\frac{C}{\alpha}}x}}\frac{1}{n^{\frac{\alpha}{\alpha_{1}}}}=O\left(\left(\frac{x^{\frac{1}{\alpha}}}{\log^{C}x}\right)^{-\frac{\alpha}{\alpha_{1}}-1}\right)\\
 & = & O\left(\frac{1}{\left(\log x\right)^{C\left(1-\frac{\alpha}{\alpha_{1}}\right)}}\right),
\end{eqnarray*}
so that we have the upper bound 
\[
O\left(\frac{x^{\frac{1}{\alpha}}}{\left(\log x\right)^{\left(C-1\right)\left(1-\frac{\alpha}{\alpha_{1}}\right)}}\frac{\left(\log\log x\right)^{r-1}}{\left(\log x\right)^{1-\frac{\alpha}{\alpha_{1}}}}\right)=O\left(\frac{x^{\frac{1}{\alpha}}}{\left(\log x\right)^{\left(C-1\right)\left(1-\frac{\alpha}{\alpha_{1}}\right)}}\right).
\]

\end{proof}
\noindent Combining \ref{eq: upper+low sigma bounds} along with Lemma \ref{lem: truncating sigma alpha series}
and Landau's estimates \ref{eq:Landau sigma}, \ref{eq:landau pi}
for $k>1$ yields
\begin{equation}
\begin{split}\sigma_{\boldsymbol{\alpha}}(x)= & \frac{1}{(k-1)!}\sum_{\begin{array}{c}
m\leq\left(\log x\right)^{C}\\
m\in\mathcal{P}_{\boldsymbol{\beta}}^{\sigma}
\end{array}}\alpha\frac{x^{\frac{1}{\alpha}}\left(\log\left(\frac{1}{\alpha}\log\left(\frac{x}{m}\right)\right)\right)^{k-1}}{m^{\frac{1}{\alpha}}\log\left(\frac{x}{m}\right)}\\
 & +O\left(\frac{x^{\frac{1}{\alpha}}}{\left(\log x\right)^{\left(C-1\right)\left(1-\frac{\alpha}{\alpha_{1}}\right)}}+\sum_{\begin{array}{c}
m\leq\left(\log x\right)^{C}\\
m\in\mathcal{P}_{\boldsymbol{\beta}}^{\sigma}
\end{array}}\frac{x^{\frac{1}{\alpha}}\left(\log\left(\frac{1}{\alpha}\log\left(\frac{x}{m}\right)\right)\right)^{k-2}}{m^{\frac{1}{\alpha}}\log\left(\frac{x}{m}\right)}\right)
\end{split}
,\label{eq:k>1 pre asymptotic}
\end{equation}
and for $k=1$ by  \ref{eq:Prime number theorem}, the prime number theorem,
we have 
\begin{equation}
\begin{split}\sigma_{\boldsymbol{\alpha}}(x)= & \sum_{\begin{array}{c}
m\leq\left(\log x\right)^{C}\\
m\in\mathcal{P}_{\boldsymbol{\beta}}^{\sigma}
\end{array}}\alpha\frac{x^{\frac{1}{\alpha}}}{m^{\frac{1}{\alpha}}\log\left(\frac{x}{m}\right)}\\
 & +O\left(\frac{x^{\frac{1}{\alpha}}}{\left(\log x\right)^{\left(C-1\right)\left(1-\frac{\alpha}{\alpha_{1}}\right)}}+\sum_{\begin{array}{c}
m\leq\left(\log x\right)^{C}\\
m\in\mathcal{P}_{\boldsymbol{\beta}}^{\sigma}
\end{array}}\frac{x^{\frac{1}{\alpha}}}{m^{\frac{1}{\alpha}}\log^{2}\left(\frac{x}{m}\right)}\right)
\end{split}
.\label{eq:k=00003D1 pre asymptotic}
\end{equation}
If we write $\left(\log\left(\frac{1}{\alpha}\log\left(\frac{x}{m}\right)\right)\right)^{k-1}=\left(\log\log\left(\frac{x}{m}\right)-\log\alpha\right)^{k-1}$
and then expand using the binomial theorem, all of the terms will
be consumed by the error term except for the one with $\left(\log\log\left(\frac{x}{m}\right)\right)^{k-1}$,
which allows us to change the main term in the above to 
\begin{equation} \label{main rewrite}
\frac{1}{(k-1)!}\sum_{\begin{array}{c}
m\leq\left(\log x\right)^{C}\\
m\in\mathcal{P}_{\boldsymbol{\beta}}^{\sigma}
\end{array}}\alpha\frac{x^{\frac{1}{\alpha}}\left(\log\log\left(\frac{x}{m}\right)\right)^{k-1}}{m^{\frac{1}{\alpha}}\log\left(\frac{x}{m}\right)}.
\end{equation}
We may clean up the error terms by bounding each part of the sum from
above. Since $m\leq\left(\log x\right)^{C}$, $\frac{1}{\log\left(\frac{x}{m}\right)}$
is bounded above by 
\[
\frac{1}{\log\left(\frac{x}{\left(\log x\right)^{C}}\right)}=\frac{1}{\log\left(x\right)-C\log\log x}=\frac{1}{\log x}+O\left(\frac{\log\log x}{\log^{2}x}\right).
\]
We also have the trivial bounds 
\[
\log\left(\frac{1}{\alpha}\log\left(\frac{x}{m}\right)\right)\leq\left(\log\left(\log\left(x\right)\right)\right),
\]
and 
\[
\sum_{\begin{array}{c}
m\leq\left(\log x\right)^{C}\\
m\in\mathcal{P}_{\boldsymbol{\beta}}^{\sigma}
\end{array}}\frac{1}{m^{\frac{1}{\alpha}}}\leq\sum_{m\in\mathcal{P}_{\boldsymbol{\beta}}^{\sigma}}m^{-\frac{1}{\alpha}}
\]
since the right hand side is a convergent series. Combining these, for integers $A\geq 0$, $B>1$
we have that
\begin{equation}
\sum_{\begin{array}{c}
m\leq\left(\log x\right)^{C}\\
m\in\mathcal{P}_{\boldsymbol{\beta}}^{\sigma}
\end{array}}\frac{x^{\frac{1}{\alpha}}\left(\log\left(\frac{1}{\alpha}\log\left(\frac{x}{m}\right)\right)\right)^{A}}{m^{\frac{1}{\alpha}}\log^{B}\left(\frac{x}{M}\right)}=O\left(\frac{x^{\frac{1}{\alpha}}\left(\log\log x\right)^{A}}{\log^{B}\left(x\right)}\right),\label{eq:Error term upper bound}
\end{equation}
which gives an upper bound on the error term in both cases, $k=1$
and $k>1$. The following lemma allows us to deal with the main term:
\begin{lem}
\label{lem: trunc part asymp}For $C>1$, we have that 
\[
\sum_{\begin{array}{c}
m\leq\left(\log x\right)^{C}\\
m\in\mathcal{P}_{\boldsymbol{\beta}}^{\sigma}
\end{array}}\frac{\left(\log\log\left(\frac{x}{m}\right)\right)^{k-1}}{m^{\frac{1}{\alpha}}\log\left(\frac{x}{m}\right)}=\frac{\left(\log\log\left(x\right)\right)^{k-1}}{\log x}\sum_{\begin{array}{c}
m\leq\left(\log x\right)^{C}\\
m\in\mathcal{P}_{\boldsymbol{\beta}}^{\sigma}
\end{array}}m^{-\frac{1}{\alpha}}+O\left(\frac{\left(\log\log x\right)^{k-1}}{\log^{2}x}\right).
\]
\end{lem}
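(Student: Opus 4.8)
The plan is to exploit the fact that the truncation $m\leq\left(\log x\right)^{C}$ forces $\log m\leq C\log\log x$, which is negligible next to $\log x$; this lets me replace $\log\left(x/m\right)$ by $\log x$ and $\log\log\left(x/m\right)$ by $\log\log x$ inside each summand, at the cost of a controlled error. Write $L=\log x$, so that $\log\left(x/m\right)=L-\log m$ with $0\leq\log m\leq C\log L$ throughout the range of summation. First I would expand the two factors of each summand separately. Since $\frac{\log m}{L}\leq C\frac{\log L}{L}=o\left(1\right)$, the geometric expansion gives
\[
\frac{1}{\log\left(x/m\right)}=\frac{1}{L-\log m}=\frac{1}{L}+O\left(\frac{\log m}{L^{2}}\right),
\]
and likewise $\log\log\left(x/m\right)=\log\left(L-\log m\right)=\log L+O\left(\frac{\log m}{L}\right)$, whence, by the binomial theorem (trivially when $k=1$, where the exponent is $0$),
\[
\left(\log\log\left(x/m\right)\right)^{k-1}=\left(\log L\right)^{k-1}+O\left(\left(\log L\right)^{k-2}\frac{\log m}{L}\right).
\]

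Multiplying the two expansions and discarding the products of error terms, which are of smaller order, the summand becomes
\[
\frac{\left(\log\log\left(x/m\right)\right)^{k-1}}{m^{\frac{1}{\alpha}}\log\left(x/m\right)}=\frac{\left(\log L\right)^{k-1}}{m^{\frac{1}{\alpha}}L}+O\left(\frac{\left(\log L\right)^{k-1}\log m}{m^{\frac{1}{\alpha}}L^{2}}\right),
\]
where the dominant error comes from pairing the main term $\left(\log L\right)^{k-1}$ with the $O\left(\log m/L^{2}\right)$ correction. Summing over $m\leq\left(\log x\right)^{C}$ with $m\in\mathcal{P}_{\boldsymbol{\beta}}^{\sigma}$, the leading term reproduces exactly $\frac{\left(\log\log x\right)^{k-1}}{\log x}\sum_{m}m^{-\frac{1}{\alpha}}$, which is the claimed main term.

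It then remains to bound the accumulated error $O\left(\frac{\left(\log L\right)^{k-1}}{L^{2}}\sum_{m}\frac{\log m}{m^{1/\alpha}}\right)$, and the only genuine work is to show that $\sum_{m\in\mathcal{P}_{\boldsymbol{\beta}}^{\sigma}}\frac{\log m}{m^{1/\alpha}}$ converges. This is where the strict gap $\alpha<\alpha_{1}$ is essential. By \ref{eq:sigma beta upper bound} we have $\sigma_{\boldsymbol{\beta}}\left(t\right)=O\left(t^{1/\alpha_{1}}\left(\log\log t\right)^{r-1}/\log t\right)$, so partial summation shows that $\sum_{m\in\mathcal{P}_{\boldsymbol{\beta}}^{\sigma}}m^{-s}$ converges for every $s>\frac{1}{\alpha_{1}}$. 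Since $\frac{1}{\alpha}>\frac{1}{\alpha_{1}}$, I may fix some $s$ with $\frac{1}{\alpha_{1}}<s<\frac{1}{\alpha}$ and write $\frac{\log m}{m^{1/\alpha}}=\frac{\log m}{m^{1/\alpha-s}}\,m^{-s}$; the factor $\frac{\log m}{m^{1/\alpha-s}}$ is bounded because $\frac{1}{\alpha}-s>0$, so the weighted series is dominated by the convergent series $\sum_{m}m^{-s}$. Hence the error is $O\left(\left(\log L\right)^{k-1}/L^{2}\right)=O\left(\left(\log\log x\right)^{k-1}/\log^{2}x\right)$, and the lemma follows. The main obstacle is precisely this convergence step: without the gap $\alpha<\alpha_{1}$ the weighted series would diverge, and truncating at $\left(\log x\right)^{C}$ would no longer be enough to absorb the correction into the error term.
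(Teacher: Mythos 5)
Your proof is correct, and it follows the same basic strategy as the paper---the truncation forces $\log m\le C\log\log x$, so one expands $\frac{1}{\log(x/m)}$ and $\left(\log\log(x/m)\right)^{k-1}$ around $\frac{1}{\log x}$ and $\left(\log\log x\right)^{k-1}$---but your error bookkeeping is genuinely different, and in fact sharper. The paper squeezes the summand between two $m$-independent envelopes (evaluating the logarithms at the endpoint of the truncation range) and then invokes its uniform bound \ref{eq:Error term upper bound}; since this replaces $\log m$ by its worst case $C\log\log x$, the error that argument actually produces, coming from pairing $\left(\log\log x\right)^{k-1}$ with the $O\!\left(\frac{\log\log x}{\log^{2}x}\right)$ correction to $\frac{1}{\log(x/m)}$, is $O\!\left(\frac{\left(\log\log x\right)^{k}}{\log^{2}x}\right)$---one factor of $\log\log x$ weaker than the lemma asserts. (This loss is harmless downstream, since $\left(\log\log x\right)^{2}=o\left(\log x\right)$ means the weaker bound still suffices for Theorem \ref{thm: Main Theorem}, but it is a real discrepancy between the paper's proof and its stated lemma.) You instead keep the per-summand error proportional to $\log m$, namely $O\!\left(\frac{\left(\log\log x\right)^{k-1}\log m}{m^{1/\alpha}\log^{2}x}\right)$, and then show that $\sum_{m\in\mathcal{P}_{\boldsymbol{\beta}}^{\sigma}}\log m\cdot m^{-1/\alpha}$ converges, via \ref{eq:sigma beta upper bound}, partial summation, and choosing $s$ with $\frac{1}{\alpha_{1}}<s<\frac{1}{\alpha}$ so that $\log m\cdot m^{s-1/\alpha}$ stays bounded. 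That convergence step is exactly what is needed to absorb the logarithm and obtain the error term $O\!\left(\frac{\left(\log\log x\right)^{k-1}}{\log^{2}x}\right)$ precisely as stated; your route costs an extra (correct) convergence argument, while the paper's route is shorter but, read literally, proves a marginally weaker bound.
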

\begin{proof}
First, note that we have the bounds 
\[
\frac{1}{\log\left(x\right)}\leq\frac{1}{\log\left(\frac{x}{m}\right)}\leq\frac{1}{\log\left(\frac{x}{\log x}\right)}
\]
and 
\[
\left(\log\log\left(\frac{x}{\log x}\right)\right)^{k-1}\leq\left(\log\log\left(\frac{x}{m}\right)\right)^{k-1}\leq\left(\log\log\left(x\right)\right)^{k-1}.
\]
Using power series expansions we may write 
\[
\frac{1}{\log\left(\frac{x}{\log x}\right)}=\frac{1}{\log\left(x\right)\left(1-\frac{\log\log x}{\log x}\right)}=\frac{1}{\log x}+O\left(\frac{\log\log x}{\log^{2}x}\right)
\]
and
\[
\left(\log\log\left(\frac{x}{\log x}\right)\right)^{k-1}=\left(\log\log x+\log\left(1-\frac{\log\log x}{\log x}\right)\right)^{k-1}=\left(\log\log x\right)^{k-1}+O\left(\frac{\left(\log\log x\right)^{k-1}}{\log x}\right).
\]
Then \ref{eq:Error term upper bound} implies that 
\[
\sum_{\begin{array}{c}
m\leq\left(\log x\right)^{C}\\
m\in\mathcal{P}_{\boldsymbol{\beta}}^{\sigma}
\end{array}}\frac{\left(\log\log\left(\frac{x}{m}\right)\right)^{k-1}}{m^{\frac{1}{\alpha}}\log\left(\frac{x}{m}\right)}=\frac{\left(\log\log x\right)^{k-1}}{\log x}\sum_{\begin{array}{c}
m\leq\left(\log x\right)^{C}\\
m\in\mathcal{P}_{\boldsymbol{\beta}}^{\sigma}
\end{array}}m^{-\frac{1}{\alpha}}+O\left(\frac{\left(\log\log x\right)^{k-1}}{\log^{2}x}\right).
\]

\end{proof}
Let $C=2\alpha\alpha_{1}+1$ so that $\left(C-1\right)\left(1-\frac{\alpha}{\alpha_{1}}\right)=2\alpha \left(\alpha_{1}-\alpha\right)\geq2$.
Upon combining \ref{eq:k>1 pre asymptotic}, \ref{main rewrite}, \ref{eq:Error term upper bound},
and lemma \ref{lem: trunc part asymp} for $k>1$ we obtain 
\begin{equation}
\sigma_{\boldsymbol{\alpha}}(x)=\alpha\frac{x^{\frac{1}{\alpha}}\left(\log\log x\right)^{k-1}}{(k-1)!\log x}\sum_{\begin{array}{c}
m\leq\left(\log x\right)^{C}\\
m\in\mathcal{P}_{\boldsymbol{\beta}}^{\sigma}
\end{array}}m^{-\frac{1}{\alpha}}+O\left(x^{\frac{1}{\alpha}}\frac{\left(\log\log x\right)^{k-2}}{\log x}\right).\label{eq:sigma alpha asymptotic ALMOST}
\end{equation}
Similarly, \ref{eq:k=00003D1 pre asymptotic}, \ref{eq:Error term upper bound},
and lemma \ref{lem: trunc part asymp} together yield

\[
\sigma_{\boldsymbol{\alpha}}(x)=\alpha\frac{x^{\frac{1}{\alpha}}}{\log x}\sum_{\begin{array}{c}
m\leq\left(\log x\right)^{C}\\
m\in\mathcal{P}_{\boldsymbol{\beta}}^{\sigma}
\end{array}}m^{-\frac{1}{\alpha}}+O\left(\frac{x^{\frac{1}{\alpha}}}{\log^{2}x}\right)
\]
for $k=1$. To deal with the last sum, write
\[
\sum_{\begin{array}{c}
m\leq\left(\log x\right)^{C}\\
m\in\mathcal{P}_{\boldsymbol{\beta}}^{\sigma}
\end{array}}m^{-\frac{1}{\alpha}}=\sum_{m\in\mathcal{P}_{\boldsymbol{\beta}}^{\sigma}}m^{-\frac{1}{\alpha}}-\sum_{\begin{array}{c}
m>\left(\log x\right)^{C}\\
m\in\mathcal{P}_{\boldsymbol{\beta}}^{\sigma}
\end{array}}m^{-\frac{1}{\alpha}}.
\]
Applying summation by parts, we have that 
\begin{eqnarray*}
\sum_{\begin{array}{c}
m>\left(\log x\right)^{C}\\
m\in\mathcal{P}_{\boldsymbol{\beta}}^{\sigma}
\end{array}}m^{-\frac{1}{\alpha}} & = & \int_{\left(\log x\right)^{C}}^{\infty}t^{-\frac{1}{\alpha}}d\left(\sigma_{\boldsymbol{\beta}}(t)\right)\\
 & = & t^{-\frac{1}{\alpha}}\sigma_{\boldsymbol{\beta}}(t)\biggr|_{\left(\log x\right)^{C}}^{\infty}+\frac{1}{\alpha}\int_{\left(\log x\right)^{C}}^{\infty}t^{-\frac{1}{\alpha}-1}\sigma_{\boldsymbol{\beta}}(t)dt.
\end{eqnarray*}
Then by \ref{eq:sigma beta upper bound} this becomes 
\[
O\left(\left(\log x\right)^{C\left(\frac{1}{\alpha_{1}}-\frac{1}{\alpha}\right)}\left(\log\log\log x\right)^{r-1}\right)=O\left(\frac{1}{\left(\log x\right)^{2}}\right)
\]
since $C\left(\frac{1}{\alpha_{1}}-\frac{1}{\alpha}\right)=-2+\left(\frac{1}{\alpha_{1}}-\frac{1}{\alpha}\right)$.
Thus for $k>1$ we have 
\begin{equation}
\sigma_{\boldsymbol{\alpha}}(x)=\alpha\frac{x^{\frac{1}{\alpha}}\left(\log\log x\right)^{k-1}}{(k-1)!\log x}\sum_{m\in\mathcal{P}_{\boldsymbol{\beta}}^{\sigma}}m^{-\frac{1}{\alpha}}+O\left(x^{\frac{1}{\alpha}}\frac{\left(\log\log x\right)^{k-2}}{\log x}\right),\label{eq:Main Theorem with error k>1}
\end{equation}
and for $k=1$,
\begin{equation}
\sigma_{\boldsymbol{\alpha}}(x)=\alpha\frac{x^{\frac{1}{\alpha}}}{\log x}\sum_{m\in\mathcal{P}_{\boldsymbol{\beta}}^{\sigma}}m^{-\frac{1}{\alpha}}+O\left(\frac{x^{\frac{1}{\alpha}}}{\left(\log x\right)^{2}}\right).\label{eq:Main Theorem with Error k=00003D1}
\end{equation}
This yields the desired asymptotic
\begin{equation}
\sigma_{\boldsymbol{\alpha}}(x)\sim\alpha\frac{x^{\frac{1}{\alpha}}\left(\log\log x\right)^{k-1}}{(k-1)!\log x}\sum_{m\in\mathcal{P}_{\boldsymbol{\beta}}^{\sigma}}m^{-\frac{1}{\alpha}},\label{eq:asymptotic log for sigma alpha}
\end{equation}
and since 
\[
\sigma_{k}\left(x^{\frac{1}{\alpha}}\right)\sim\alpha\frac{x^{\frac{1}{\alpha}}\left(\log\log x\right)^{k-1}}{(k-1)!\log x}
\]
by Landau's estimates \ref{eq:pi/sigma Landau asymptotic}, we conclude
that 
\begin{equation}
\sigma_{\boldsymbol{\alpha}}(x)\sim\sigma_{k}\left(x^{\frac{1}{\alpha}}\right)\sum_{m\in\mathcal{P}_{\boldsymbol{\beta}}^{\sigma}}m^{-\frac{1}{\alpha}},\label{eq:Main theorem asymp sigma style}
\end{equation}
proving the first part of Theorem \ref{thm: Main Theorem}.

\subsection{$\pi_{\boldsymbol{\alpha}}(x)$}

To prove the same result for $\pi_{\boldsymbol{\alpha}}(x)$, we start
again by splitting integers $n\in\mathcal{P}_{\boldsymbol{\alpha}}^{\sigma}$
into two parts, one in $\mathcal{P}_{k}^{\pi}$, and one in $\mathcal{P}_{\boldsymbol{\beta}}^{\pi}$.
With this in mind we consider 
\[
\sum_{\begin{array}{c}
n^{\alpha}m\leq x\\
n\in\mathcal{P}_{k}^{\pi},m\in\mathcal{P}_{\boldsymbol{\beta}}^{\pi}
\end{array}}1.
\]
This will be strictly larger then $\pi_{\boldsymbol{\alpha}}(x)$
since $n$ and $m$ may have prime factors in common. (Note that since
all factors are distinct, we cannot have multiple representations
$k=mn$.) However, we can throw out all of the terms for which $\gcd\left(m,n\right)>1$
without affecting the asymptotic. Write $n=q_{1}\cdots q_{k}$, and
$m=p_{1}^{\alpha_{1}}\cdots p_{r}^{\alpha_{r}}$. If $\gcd\left(m,n\right)>1$,
then we must have $q_{i}=p_{j}$ for some $i,j$. The set of all tuples
with $q_{i}=p_{j}$ is bounded above by 
\[
\sigma_{\boldsymbol{\alpha}_{i,j}}(x)
\]
where $\boldsymbol{\alpha}_{i,j}=\left(\alpha,\dots,\alpha,\alpha_{1},\cdots,(\alpha_{j}+\alpha),\cdots,\alpha_{r}\right)\in\mathbb{N}^{k-1+r}$
and we have $k-1$ copies of $\alpha$. In particular, by \ref{eq:asymptotic log for sigma alpha},
we see that 
\[
\sigma_{\boldsymbol{\alpha}_{i,j}}(x)=O\left(x^{\frac{1}{\alpha}}\frac{\left(\log\log x\right)^{k-2}}{\log x}\right)
\]
for $k>1$, and 
\[
\sigma_{\boldsymbol{\alpha}_{i,j}}(x)=O_{\epsilon}\left(x^{\frac{1}{\alpha_{1}}+\epsilon}\right)
\]
for any $\epsilon>0$ when $k=1$. Since there are at most $k\cdot r$
possible pairs $\left(i,j\right)$, it follows that for $k>1$ 
\[
\pi_{\boldsymbol{\alpha}}(x)=\sum_{\begin{array}{c}
n^{\alpha}m\leq x\\
n\in\mathcal{P}_{k}^{\pi},m\in\mathcal{P}_{\boldsymbol{\beta}}^{\pi}
\end{array}}1+O\left(x^{\frac{1}{\alpha}}\frac{\left(\log\log x\right)^{k-2}}{\log x}\right),
\]
 and a similar error term as before when $k=1$. The main term may be rewritten as 
\[
\sum_{\begin{array}{c}
m\leq x\\
m\in\mathcal{P}_{\boldsymbol{\beta}}^{\pi}
\end{array}}\sum_{\begin{array}{c}
n^{\alpha}\leq \frac{x}{m}\\
n\in\mathcal{P}_{k}^{\pi}
\end{array}}1=\sum_{\begin{array}{c}
m\leq x\\
m\in\mathcal{P}_{\boldsymbol{\beta}}^{\pi}
\end{array}}\pi_{k}\left(\left(\frac{x}{m}\right)^{\frac{1}{\alpha}}\right),
\]
and from here, following through the exact same sequence of steps
and lemmas from the previous section will yield 
\[
\sum_{\begin{array}{c}
m\leq x\\
m\in\mathcal{P}_{\boldsymbol{\beta}}^{\pi}
\end{array}}\pi_{k}\left(\left(\frac{x}{m}\right)^{\frac{1}{\alpha}}\right)\sim\alpha\frac{x^{\frac{1}{\alpha}}\left(\log\log x\right)^{k-1}}{(k-1)!\log x}\sum_{m\in\mathcal{P}_{\boldsymbol{\beta}}^{\pi}}m^{-\frac{1}{\alpha}}.
\]
All of the upper bounds for $\sigma_{\boldsymbol{\alpha}}(x)$ still
apply to $\pi_{\boldsymbol{\alpha}}(x)$, and the only change is that
we are summing over $\mathcal{P}_{\boldsymbol{\beta}}^{\pi}$ rather
then $\mathcal{P}_{\boldsymbol{\beta}}^{\sigma}$, which is why the
final sum is different. Using \ref{eq:pi/sigma Landau asymptotic},
we get that 
\begin{equation}
\pi_{\boldsymbol{\alpha}}(x)\sim\pi_{k}\left(x^{\frac{1}{\alpha}}\right)\sum_{m\in\mathcal{P}_{\boldsymbol{\beta}}^{\pi}}m^{-\frac{1}{\alpha}},\label{eq:pi main theorem asymptotic}
\end{equation}
proving the second part of Theorem \ref{thm: Main Theorem}. If the
error term is kept throughout the above computations, we get the more
precise 
\begin{equation}
\pi_{\boldsymbol{\alpha}}(x)=\alpha\frac{x^{\frac{1}{\alpha}}\left(\log\log x\right)^{k-1}}{(k-1)!\log x}\sum_{m\in\mathcal{P}_{\boldsymbol{\beta}}^{\pi}}m^{-\frac{1}{\alpha}}+O\left(x^{\frac{1}{\alpha}}\frac{\left(\log\log x\right)^{k-2}}{\log x}\right)\label{eq:main theorem pi with error k>1}
\end{equation}
when $k>1$, and 
\begin{equation}
\pi_{\boldsymbol{\alpha}}(x)=\alpha\frac{x^{\frac{1}{\alpha}}\left(\log\log x\right)^{k-1}}{(k-1)!\log x}\sum_{m\in\mathcal{P}_{\boldsymbol{\beta}}^{\pi}}m^{-\frac{1}{\alpha}}+O\left(\frac{x^{\frac{1}{\alpha}}}{\log^{2}x}\right),\label{eq:main theorem pi with error k=00003D1}
\end{equation}
for $k=1$.

\section{The Constant Factor}

Let $\alpha>0$ be given,  let $A=\left\{ \alpha_{1},\cdots,\alpha_{r}\right\} $
where $\alpha<\alpha_{i}\leq\alpha_{j}$ for all $i,j$, and set
set $\boldsymbol{\beta}=\left(\alpha_{1},\dots,\alpha_{r}\right)$.
If every $n\in\mathcal{P}_{\boldsymbol{\beta}}^{\sigma}$ has one
and only one representation of the form $n=p_{1}^{\alpha_{1}}\cdots p_{r}^{\alpha_{r}}$,
then we may decompose the sum as 
\[
\sum_{n\in\mathcal{P}_{\boldsymbol{\beta}}^{\sigma}}n^{-\frac{1}{\alpha}}=\sum_{p_{1}}\sum_{p_{2}}\cdots\sum_{p_{r}}\left(p_{1}^{\alpha_{1}}\cdots p_{r}^{\alpha_{r}}\right)^{-\frac{1}{\alpha}}.
\]
This equals 
\[
\left(\sum_{p_{1}}p_{1}^{-\frac{\alpha_{1}}{\alpha}}\right)\cdots\left(\sum_{p_{r}}p_{r}^{-\frac{\alpha_{r}}{\alpha}}\right)
\]
which by definition of the prime zeta function, $P(s)=\sum_{p}p^{-s}$,
is 
\[
\prod_{i=1}^{r}P\left(\frac{\alpha_{i}}{\alpha}\right).
\]
We now show that each integer can be uniquely represented if and only
if $\sum_{i}\epsilon_{i}\alpha_{i}=0$ with $\epsilon_{i}\in\left\{ -1,0,1\right\} $
implies that every $\epsilon_{i}=0$. Suppose we are given $\epsilon_{i}$,
not all zero, with $\sum_{i}\epsilon_{i}\alpha_{i}=0$. Then we have
then we have $\alpha_{i_{1}}+\alpha_{i_{2}}+\cdots+\alpha_{i_{k}}=\alpha_{j_{1}}+\alpha_{j_{2}}+\cdots+\alpha_{j_{l}}=M$
for some $M$ where each all of the $i_{n}$ and $j_{m}$ are distinct.
Setting $p_{i_{1}}=\cdots=p_{i_{k}}=p$, and $p_{j_{1}}=\cdots=p_{j_{l}}=q$,
we will have a factor of $q^{M}p^{M}$, and this allows us to permute
$q$ and $p$ giving two representations of the same integer. Conversely,
if we have two representations of the same integer, then it must be
because of a factor of the form $q^{M}p^{M}$, which implies that
we must have $\sum_{i}\epsilon_{i}\alpha_{i}=0$ for some non zero
choices $\epsilon_{i}$. This then completes the proof of Theorem
\ref{thm:Theorem 2}.

\end{document}